\newtheorem{theorem}{Theorem}[section]
\newtheorem{example}[theorem]{Example}
\newtheorem{remark}{\sc Remark}
\newtheorem{lemma}{\sc Lemma}[section]
\newtheorem{corollary}{\sc Corollary}[section]
\newtheorem{definition}{\sc Definition}[section]
\newcommand{\be}{\begin{eqnarray}}
\newcommand{\ee}{\end{eqnarray}}
\newcommand{\Be}{\begin{eqnarray*}}
\newcommand{\Ee}{\end{eqnarray*}}
\newcommand{\bee}{\begin{equation}}
\newcommand{\eee}{\end{equation}}
\newcommand{\ba}{\begin{array}}
\newcommand{\ea}{\end{array}}
\newcommand{\bl}{\begin{lemma}}
\newcommand{\el}{\end{lemma}}
\newcommand{\bd}{\begin{definition}}
\newcommand{\ed}{\end{definition}}
\newcommand{\bt}{\begin{theorem}}
\newcommand{\et}{\end{theorem}}
\newcommand{\bp}{\begin{proof}}
\newcommand{\ep}{\end{proof}}
\newcommand{\bi}{\begin{itemize}}
\newcommand{\ei}{\end{itemize}}
\newcommand{\br}{\begin{remark}}
\newcommand{\er}{\end{remark}}
\newcommand{\bc}{\begin{corollary}}
\newcommand{\ec}{\end{corollary}}
\newcommand{\bex}{\begin{example}}
\newcommand{\eex}{\end{example}}
\begin{document}
\date{}
\title{\textbf{Projective changes between two Finsler spaces with $(\alpha, \beta )$-metrics} }
\maketitle
\begin{center} 
\author{\textbf{Gauree Shanker* and Sruthy Asha Baby** }}\\
*Department of Mathematics and Statistics\\
Central University of Punjab, Bathinda\
Punjab-151 001, India\\
Email:gshankar@cup.ac.in\\
**Department of Mathematics and Statistics\\
Banasthali University, Banasthali\\
Rajasthan-304 022, India\\
Email:sruthymuthu123@gmail.com.
\end{center}
\begin{center}
\textbf{Abstract}
\end{center}
\begin{small}
In the present paper, we find the conditions to characterize projective change between two $(\alpha, \beta)$-metrics, F = $\alpha + \epsilon \beta + k\frac{\beta^2}{\alpha}$ ($\epsilon$ and k $\neq$ 0 are constants) and a Matsumoto metric $\bar{F}=\frac{\bar{\alpha}^2}{\bar{\alpha} -\bar{\beta}}$ on a manifold with dimension $n \geq 3$ where $\alpha$ and $\bar{\alpha}$ are two Riemannian metrics, $\beta$ and $\bar{\beta}$ are two non-zero 1-forms. Moreover, we study such projective changes when F has some special curvature properties.\\
\end{small}\\
\textbf{Mathematics Subject Classification:} 53B40, 53C60.\\
\textbf{Keywords and Phrases:} Projective change, Finsler space with $(\alpha, \beta)$-metric, Matsumoto metric.

\section{Introduction}

The projective change between two Finsler spaces have been studied by many authors ( [\ref{PC-2}], [\ref{PC-5}], [\ref{PC-6}], [\ref{PC-14}] ). An interesting result concerned with the theory of projective change was given by Rapcsak's paper [\ref{PC-11}]. He proved the necessary and sufficient condition for projective change. In 2008, H. S. Park and Y. Lee [\ref{PC-8}] studied on a class of projectively changes between a Finsler space with $(\alpha, \beta)$-metric and the associated Riemannian metric.\\

In this paper, we find the relation between two Finsler spaces with general $(\alpha, \beta)$-metric F = $\alpha + \epsilon \beta + k\frac{\beta^2}{\alpha}$ ($\epsilon$ and k $\neq$ 0 are constants) and a Matsumoto metric $\bar{F}=\frac{\bar{\alpha}^2}{\bar{\alpha} -\bar{\beta}}$ respectively under projective change.\\

\section{Preliminaries}

The terminology and notation are referred to ( [\ref{PC-1}], [\ref{PC-3}], [\ref{PC-12}] ). Let $F^n = (M, F)$ be a Finsler space on a differential manifold M endowed with a fundamental function $F(x, y)$. We use the following notations:\\
\begin{eqnarray*}
(a). &&g_{ij} = \frac{1}{2} \dot{\partial}_i \dot{\partial}_j F^2, \ \ \dot{\partial}_j = \frac{\partial}{\partial y^i},\\
(b). &&C_{ijk} = \frac{1}{2} \dot{\partial}_kg_{ij},\\
(c). &&h_{ij} = g_{ij} - l_il_j,\\
(d). &&\gamma^i_{jk} = \frac{1}{2} g^{ir} (\partial_j g_{rk} + \partial_k g_{rj} - \partial_r g_{jk}),\\
(e). &&G^i = \frac{1}{2} \gamma ^i_{jk} y^j y^k,\ \  G^i_j = \dot{\partial}_j G^i,\ \ G^i_{jk} = \dot{\partial_k} G^i_j,\ \ G^i_{jkl} = \dot{\partial_j} G^i_{jk}.
\end{eqnarray*}

The concept of $(\alpha, \beta)$-metric $F(\alpha, \beta)$ was introduced in 1972 by M. Matsumoto and studied by many authors ( [\ref{PC-10}], [\ref{PC-15}]).
The Finsler space $F^n = (M, F)$ is said to have an $(\alpha, \beta)$-metric, if F is a positively homogenous function of a Finsler metric on a same underlying manifold M and it is called projective if any geodesic in $(M, F)$ remains to be a geodesic in $(M, \bar{F})$ and viceversa. We say that a Finsler metric is projectively related to another metric if they have the same geodesics as point sets. In Riemannian geometry, two Riemannian metrics $\alpha$ and $\bar{\alpha}$ are projectively related if and only if, their spray coefficients have the relation [\ref{PC-5}]

\begin{equation}
G^i_\alpha = G^i_{\bar{\alpha}}+ \lambda_{x^k} y^k y^i,
\end{equation}
where $\lambda = \lambda(x)$ is a scalar function on the based manifold and $(x^i, y^i)$ denotes the local coordinates in the tangent bundle TM.\\

Two Finsler metrics $F$ and $\bar{F}$ are projectively related if and only if their spray coefficients have the relation [\ref{PC-5}]\\

\begin{equation}\label{PC.10.10}
G^i = \bar{G}^i + P(y)y^i,
\end{equation}
where P(y) is a scalar function on $TM \backslash \{0\}$ and homogenous of degree one in $y$. A Finsler metric is called a projectively flat metric, if it is projectively related to a locally Minkowskian metric.\\

For a given Finsler metric $F = F(x,y)$, the geodesics of F satisfy the following ODEs:\\

\begin{equation}
\frac{d^2 x^i}{dt^2} + 2 G^i (x, \frac{dx}{dt}) = 0,
\end{equation}
where $G^i = G^i (x,y)$ are called the geodesic coefficients, which are given by 

\begin{equation}
G^i = \frac{1}{4} g^{il} \{[F^2]_{x^m y^l}y^m -[F^2]_{x^l} \}.
\end{equation}
\\
Let $\phi = \phi(s)$, $|s| < b_0$ be a positive $C^{\infty}$ function satisfying the following 

\begin{equation}\label{P.2.3}
\phi(s) - s \phi'(s) + (b^2 -s^2)\phi''(s) > 0,\ \ \ \ (|s| \leq b < b_0).
\end{equation}
\\
If $\alpha = \sqrt{a_{ij} y^i y^j}$ is a Riemannian metric and $\beta = b_iy^i$ is a 1-form satisfying \textbardbl $\beta_x$ \textbardbl $_{\alpha} < b_0$ $\forall$ x $\in$ M, then F = $\alpha \phi (s)$, s = $\frac{\beta}{\alpha}$, is called an (regular ) $(\alpha , \beta)$-metric. In this case, the fundamental form of the metric tensor induced by F is positive definite.\\
\\
Let $\bigtriangledown \beta $ = $b_{i|j} dx^i \otimes dx^j$ be a covarient derivative of $\beta$ with respect to $\alpha$.\\
\\
Denote
\begin{equation}
r_{ij} = \frac{1}{2} (b_{i|j} + b_{j|i}),\ \ \ \ \ s_{ij} = \frac{1}{2}(b_{i|j} - b_{j|i}),
\end{equation}
\\
$\beta$ is closed  if and only if $s_{ij} = 0$ [\ref{PC-13}]. Let $s_j = b^i s_{ij}$, $s^i_j$ = $a^{il}s_{lj}$, $s_0$= $s_i y^i$, $s^i_0 = s^i_j y^j$ and $r_{00} = r_{ij} y^i y^j$.\\

The relation between the geodesic coefficients $G^i$ of F and geodesic coefficients $G^i_{\alpha}$ of $\alpha$ is given by\\
\begin{equation}\label{P.2.4}
G^i = G^i_{\alpha} + \alpha Q s^i_0+ \{-2 Q \alpha s_0 + r_{00}\}\{ \Psi b^i + \Theta \alpha^{-1} y^i \},
\end{equation}
where
\begin{eqnarray*}
\Theta &=& \frac{\phi \phi' - s (\phi\phi'' +\phi'\phi')}{2\phi\{(\phi -s\phi') + (b^2 -s^2)\phi''\}},\\
Q &=& \frac{\phi'}{\phi -s \phi'},\\
\Psi &=& \frac{1}{2} \frac{\phi''}{\{ (\phi -s\phi') + (b^2 -s^2)\phi''\}}.
\end{eqnarray*}
\begin{definition}(Douglas metric)
Let 
\begin{equation}\label{P.2.5}
D^i_{jkl} = \frac{\partial^3}{\partial y^j \partial y^k \partial y^l}\Bigl( G^i - \frac{1}{n+1} \frac{\partial G^m}{\partial y^m} y^i\Bigr),
\end{equation}
where $G^i$ are the spray coefficients of F. The tensor $D = D^i_{jkl} \partial_i \otimes dx^j \otimes dx^k \otimes dx^l$ is called the Douglas tensor. A Finsler metric is called Douglas metric if the Douglas tensor vanishes.\\
\end{definition}
In [\ref{PC-7}], the authors characterized the $(\alpha, \beta)$-metrics of Douglas type.
\begin{lemma} ([\ref{PC-7}])
Let F = $\alpha \phi (\frac{\beta }{\alpha})$ be a regular $(\alpha, \beta)$-metric on an n-dimensional
manifold M $(n \geq 3)$. Assume that $\beta$ is not parallel with respect to $\alpha$ and $db\neq 0$
everywhere or b=constant, and F is not of Randers type. Then F is a Douglas metric
if and only if the function $\phi = \phi(s)$ with $\phi (0) =1$ satisfies following ODE:
\begin{equation}
[1 + (k_1 + k_2s^2)s^2 + k_3s^2]\phi'' = (k_1 + k_2s^2)(\phi - s \phi')
\end{equation}
and $\beta$ satisfies 
\begin{equation}
b_{i|j} = 2\sigma[(1 + k_1b^2)a_{ij} + (k_2b^2 + k_3)b_ib_j ];
\end{equation}
where $b^2$ =\textbardbl $\beta$ \textbardbl $^2_{\alpha}$ and $\sigma = \sigma (x)$ is a scalar function and $k_1$, $k_2$, $k_3$ are constants with $(k_2, k_3)\neq 0$.
\end{lemma}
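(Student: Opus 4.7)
The strategy is to turn the vanishing of the Douglas tensor into an algebraic--differential system for $\phi$ and for $b_{i|j}$. The key observation is that $D^i_{jkl}$ is unchanged under modifications of $G^i$ by terms of the form $P(x,y)y^i$ or by polynomials of degree at most two in $y$; since $G^i_\alpha$ is quadratic in $y$, the metric $F$ is Douglas if and only if $G^i - G^i_\alpha$ is, modulo $y^i$-proportional terms, a quadratic polynomial in $y$.

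Using formula (\ref{P.2.4}), the summand $\{-2Q\alpha s_0+r_{00}\}\Theta\alpha^{-1}y^i$ is projective and drops out, leaving the requirement that
\[
T^i := \alpha Q\, s^i_0 + \Psi\bigl(r_{00} - 2Q\alpha s_0\bigr)b^i
\]
be a polynomial of degree $2$ in $y$. Since $Q=Q(s)$ and $\Psi=\Psi(s)$ depend only on $s=\beta/\alpha$, I would clear denominators and rearrange $T^i$ into the form $A^i(x,y)+\alpha B^i(x,y)$ with $A^i,B^i$ polynomial in the $y^j$. The irrationality of $\alpha$ then forces $A^i=0$ and $B^i=0$ as separate tensorial identities, which splits the single Douglas condition into two decoupled equations.

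Next I would exploit the linear independence of $b^i$ and $s^i_0$ (guaranteed since $\beta$ is not parallel and $n\geq 3$) to isolate the coefficient of $b^i$ from that of $s^i_0$. The $s^i_0$-component depends on the rational structure of $Q(s)$ and $\Psi(s)$ alone, and matching powers of $s$ produces the claimed ODE $[1+(k_1+k_2 s^2)s^2+k_3 s^2]\phi'' = (k_1+k_2 s^2)(\phi-s\phi')$, in which $k_1,k_2,k_3$ appear as the constants of integration parametrising the admissible family of $\phi$. The $b^i$-component, after the ODE is substituted back, reduces to a purely algebraic identity on $r_{ij}$, $s_{ij}$ and $b_i$: its skew-symmetric part forces $s_{ij}=0$, while its symmetric part collapses to $b_{i|j}=2\sigma\bigl[(1+k_1 b^2)a_{ij}+(k_2 b^2+k_3)b_i b_j\bigr]$, with $\sigma(x)$ the common proportionality factor.

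The main obstacle lies in the separation step: once denominators are cleared, the equations $A^i=0$ and $B^i=0$ are polynomials in $y$ whose coefficients entangle derivatives of $\phi$ with components of $r_{ij}$ and $s_{ij}$, and extracting a pure-$\phi$ equation on one side and a pure-geometric equation on the other requires a careful induction on the powers of $s$. The nondegeneracy hypothesis $(k_2,k_3)\neq 0$ and the exclusion of Randers type (which would trivialise $\Psi$ and collapse the $b^i$-term) are precisely what prevent the decoupling from degenerating; the assumption $n\geq 3$ guarantees enough freedom in the symmetric $(0,2)$-tensor space that $a_{ij}$ and $b_ib_j$ span independent directions, so that the scalar coefficients $1+k_1 b^2$ and $k_2 b^2+k_3$ can be read off unambiguously.
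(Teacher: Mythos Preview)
The paper does not prove this lemma: it is quoted verbatim from reference~[\ref{PC-7}] (and the closely related characterisation in~[\ref{PC-17}]) and is used only as a black box in the proof of Theorem~\ref{P.them.1}. There is therefore no in-paper argument to compare your proposal against.

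That said, your outline matches the strategy of the original source. The reduction to $T^i - \tfrac{1}{n+1}T^m_{y^m}y^i$ being quadratic in $y$ is exactly how the paper sets up the Douglas condition for later use (equations (\ref{P.2.6})--(\ref{P.2.9})), and the rational/irrational splitting on $\alpha$ is the standard device in this literature. One point to tighten: the decoupling you describe does not separate as cleanly as ``$s^i_0$-part gives the ODE, $b^i$-part gives $b_{i|j}$''. In the actual argument the ODE on $\phi$ arises from requiring that the $b^i$-coefficient $\Psi(r_{00}-2Q\alpha s_0)$, after the $b_{i|j}$-ansatz is inserted, become polynomial in $y$; the $\alpha Q s^i_0$ term is handled by the consequence $s_{ij}=0$ of the symmetric $b_{i|j}$-formula rather than by an independent matching. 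Your sketch inverts this dependency, which would make the ``careful induction on the powers of $s$'' you flag harder than necessary.
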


\begin{lemma}([\ref{PC-8}])
A Matsumoto space $F^n = (M, F) $ with $F$= $\frac{\alpha^2}{\alpha - \beta}$ $(n>2)$ is of Douglas type, if and only if $b_{i|j}=0$.
\end{lemma}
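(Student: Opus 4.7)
The plan is to apply the preceding lemma, which characterises Douglas-type $(\alpha,\beta)$-metrics, to the particular case $\phi(s) = 1/(1-s)$ corresponding to the Matsumoto metric $F = \alpha^{2}/(\alpha-\beta)$. Since $\phi(0)=1$ and $\phi$ is plainly not affine in $s$, $F$ is not of Randers type, so the preceding lemma applies: either $\beta$ is parallel with respect to $\alpha$, or $\phi$ solves the stated second-order ODE for some constants $k_{1},k_{2},k_{3}$ with $(k_{2},k_{3})\neq 0$.

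For the forward direction, assume $F$ is Douglas. The strategy is to rule out the ``non-parallel'' branch of the preceding lemma. I would compute $\phi'(s)=(1-s)^{-2}$, $\phi''(s)=2(1-s)^{-3}$ and $\phi-s\phi'=(1-2s)/(1-s)^{2}$, and substitute into the ODE. After clearing the $(1-s)^{3}$ denominator, the ODE reduces to the polynomial identity in $s$
\[
2\bigl[1+(k_{1}+k_{2}s^{2})s^{2}+k_{3}s^{2}\bigr]=(k_{1}+k_{2}s^{2})(1-2s)(1-s).
\]
Matching coefficients degree by degree should yield an inconsistent system for every admissible choice of $k_{1},k_{2},k_{3}$; indeed, the constant term forces $k_{1}=2$ whereas the coefficient of $s^{1}$ forces $-3k_{1}=0$, an immediate contradiction. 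Thus the ``non-parallel'' alternative is impossible, and $\beta$ must be parallel, i.e.\ $b_{i|j}=0$.

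For the converse, assume $b_{i|j}=0$. The definitions of $r_{ij}$ and $s_{ij}$ in the preliminaries give $r_{ij}=s_{ij}=0$, so that $r_{00}$, $s_{0}$ and $s^{i}_{0}$ all vanish. Substituting into the formula (\ref{P.2.4}) collapses it to $G^{i}=G^{i}_{\alpha}$, which is homogeneous of degree two in $y$ because $\alpha$ is Riemannian. Consequently $G^{i}-\tfrac{1}{n+1}(\partial G^{m}/\partial y^{m})y^{i}$ is quadratic in $y$, its third $y$-derivatives in (\ref{P.2.5}) vanish identically, and $F$ is Douglas (in fact Berwald).

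The main obstacle is the algebraic verification of the forward direction: one must check that no triple $(k_{1},k_{2},k_{3})$ with $(k_{2},k_{3})\neq 0$ slips through. The sharp contradiction between the $s^{0}$ and $s^{1}$ coefficients above suggests this is quick, but I would verify the $s^{2}$, $s^{3}$ and $s^{4}$ coefficients as well to confirm that no degenerate configuration — for instance one in which the denominator $(1-s)^{3}$ shares a factor with the unknown data — has been silently absorbed. The converse is a mechanical substitution into (\ref{P.2.4}) and (\ref{P.2.5}), and could be presented first if one prefers to open with the easy implication.
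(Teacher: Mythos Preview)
The paper does not supply its own proof of this lemma; it is simply quoted from reference~[\ref{PC-8}]. Your derivation via the preceding Lemma~2.1 is therefore not a comparison target but a legitimate self-contained route, and the algebra is correct: with $\phi(s)=(1-s)^{-1}$ one has $\phi''=2(1-s)^{-3}$ and $\phi-s\phi'=(1-2s)(1-s)^{-2}$, and after clearing $(1-s)^3$ the $s^0$ coefficient forces $k_1=2$ while the $s^1$ coefficient forces $-3k_1=0$, an immediate contradiction independent of $k_2,k_3$. The converse via $G^i=G^i_\alpha$ and (\ref{P.2.5}) is also fine.

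One small gap worth flagging: Lemma~2.1 carries, besides ``$\beta$ not parallel'' and ``$F$ not of Randers type'', the side hypothesis ``$db\neq 0$ everywhere or $b=$ constant''. Your dichotomy ``either $\beta$ is parallel or the ODE holds'' silently assumes this is satisfied, so the case where $\beta$ is non-parallel but $b$ has isolated critical points is not literally covered by your appeal to Lemma~2.1. In practice this is easily patched---work on the open set where $db\neq 0$, obtain the contradiction there, and conclude by continuity that $\beta$ is parallel everywhere---but you should say so explicitly if you present the argument in full.
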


We know that Douglas tensor is a projective invariant [\ref{PC-7}]. Note that the spray coefficients of a Riemannian metric are quadratic forms and one can see that the Douglas tensor vanishes from (\ref{P.2.5}). This shows that Douglas tensor is a non-Riemannian quality.\\

In the following, we use quantities with bar to denote the corresponding quantities of the metric $\bar{F}$. Now, we compute the Douglas tensor of a general $(\alpha, \beta)$-metric.\\
Let 
\begin{equation}
\bar{G}^i = G^i_{\alpha} + \alpha Q s^i_0 +\Psi \{-2 Q \alpha s_0 + r_{00}\} b^i.
\end{equation}
Then (\ref{P.2.4}) becomes 
\begin{equation}
G^i = \bar{G}^i + \Theta \{ -2 Q \alpha s_0 + r_{00}\} \alpha^{-1} y^i.
\end{equation}
Clearly, $G^i$ and $\bar{G}^i$ are projective equivalent according to (\ref{PC.10.10}), they have the same Douglas tensor.\\
Let 
\begin{equation}\label{P.2.6}
T^i = \alpha Q s^i_0 + \Psi \{-2 Q \alpha s_0 + r_{00}\}b^i.
\end{equation}
Then $\bar{G}^i$ = $G^i_{\alpha } + T^i$, thus 
\begin{eqnarray*}
D^i_{jkl} &=& \bar{D}^i_{jkl},\\
&=& \frac{\partial^3}{\partial y^j \partial y^k \partial y^l}\Bigl( G^i_{\alpha} - \frac{1}{n + 1} \frac{\partial G^m_{\alpha}}{\partial y^m} y^i + T^i - \frac{1}{n + 1} \frac{\partial T^m}{\partial y^m} y^i \Bigr),\\
&=&  \frac{\partial^3}{\partial y^j \partial y^k \partial y^l} \Bigl(T^i  -\frac{1}{n + 1} \frac{\partial T^m}{\partial y^m} y^i \Bigr).
\end{eqnarray*}
\begin{equation}
\label{P.2.7}
\end{equation}
To simplify (\ref{P.2.7}), we use the following identities
\begin{equation}
\alpha_{y^k} = \alpha^{-1} y_k, \ \ \ s_{y^k} = \alpha^{-2} (b_k \alpha - s y_k),
\end{equation}
where $y_i = a_{il}y^l$, $\alpha_{y^k}= \frac{\partial \alpha}{\partial y^k}$. Then
\begin{eqnarray*}
[\alpha Q s^m_0]_{y^m} &=& \alpha^{-1} y_m Q s^m_0 + \alpha^{-2} Q' [b_m \alpha^2 - \beta y_m]s^m_0,\\
&=& Q' s_0
\end{eqnarray*}
and 
\begin{eqnarray*}
[\Psi (-2 Q) \alpha s_0 + r_{00}b^m]_{y^m} &=& \Psi' \alpha^{-1} (b^2 -s^2 )[r_{00} - 2 Q \alpha s_0]\\&& + 2 \Psi [r_0 - Q' (b^2 - s^2) s_0 - Q s s_0],
\end{eqnarray*}
where $r_j = b^i r_{ij}$ and $r_0 = r_i y^i$. Thus from (\ref{P.2.6}), we obtain

\begin{eqnarray*}
T^m_{y^m} &=& Q' s_0 + \Psi' \alpha^{-1} (b^2- s^2) [r_{00} -2 Q \alpha s_0] \\&& + 2 \Psi [r_0 - Q' (b^2 - s^2) s_0 - Q s s_0],
\end{eqnarray*}
\begin{equation}
\label{P.2.8}
\end{equation}
Now, we assume that the $(\alpha, \beta)$-metrics $F$ and $\bar{F}$ have the same Douglas tensor, i.e., $D^i_{jkl} = \bar{D}^i_{jkl}$. Thus from (\ref{P.2.5}) and (\ref{P.2.7}), we get

\begin{equation*}
\frac{\partial^3}{\partial y^j \partial y^k \partial y^l} \Bigl(T^i -\bar{T}^i -\frac{1}{n + 1} (T^m_{y^m}- \bar{T}^m_{y^m} )y^i\Bigr) =0.
\end{equation*}
Then there exists a class of scalar function $H^i_{jk} = H^i_{jk}(x)$, such that
\begin{equation}\label{P.2.9}
H^i_{00} = T^i -\bar{T}^i -\frac{1}{n + 1} (T^m_{y^m}- \bar{T}^m_{y^m} )y^i,
\end{equation}
where $H^i_{00} = H^i_{jk}y^j y^k$, $T^i$ and $T^m_{y^m}$ are given by the relations  (\ref{P.2.6}) and (\ref{P.2.8}) respectively.

\section{Projective change between two Finsler spaces with $(\alpha, \beta)$-metric}
In this section, we find the projective relation between two important class of  $(\alpha, \beta)$-metrics, general $(\alpha, \beta)$-metric $\alpha + \epsilon \beta + k \frac{\beta^2}{\alpha}$ and Matsumoto metric $\frac{\bar{\alpha}^2}{\bar{\alpha} - \bar{\beta}}$ on a same underlying manifold M of dimension $n \geq 3$. \\

Now, let us consider the general $(\alpha, \beta)$-metrics in the form $F = \alpha + \epsilon \beta + k\frac{\beta^2}{\alpha}$. Let $b_0 = b_0(k) > 0$
be the largest number such that $1 + 2kb^2 + 3ks^2 > 0$, $|s|\leq b < b_0$. Then $F = \alpha + \epsilon \beta + k\frac{\beta^2}{\alpha}$ is a Finsler metric if and only if $\beta$ satisfies b =\textbardbl $\beta$ \textbardbl$_{\alpha}< b_0$. By (\ref{P.2.4}), the geodesic coefficients of F are determined by

\begin{eqnarray*}
Q &=& \frac{\epsilon + 2ks}{1-ks^2},\\
\Theta &=& \frac{\epsilon -3\epsilon k s^2 - 4k^2 s^2}{2(1+2kb^2 -3ks^2)(1+\epsilon s +ks^2)},\\
\Psi &=& \frac{k}{1 + 2kb^2 -3ks^2}.
\end{eqnarray*}
\begin{equation}
\label{P.3.1}
\end{equation}
Now, for Matsumoto metric $\bar{F}$ =$\frac{\bar{\alpha}^2}{\bar{\alpha} -\bar{\beta}}$, one can prove by
(\ref{P.2.3}) that F is a regular Finsler metric if and only if 1-form satisfies the condition \textbardbl $\bar{\beta}$ \textbardbl$_{\bar{\alpha}} <\frac{1}{2}$, for any $x\in M$. The geodesic coefficients are given by (\ref{P.2.4}) with
\begin{eqnarray*}
\bar{Q} &=& \frac{1}{1-2s},\\
\bar{\Theta} &=& \frac{1-4s}{2(1  + 2b^2-3s)},\\
\bar{\Psi} &=& \frac{1}{1+ 2b^2 -3s}.
\end{eqnarray*}
\begin{equation}
\label{P.3.2}
\end{equation}
First, we have the following lemma:
\begin{lemma}\label{P.lem.1}
Let $F= \alpha + \epsilon \beta + k\frac{\beta^2}{\alpha}$ ($\epsilon$ and $k\neq 0$ are constants ) be a general $(\alpha, \beta)$-metric and $\bar{F}$ = $\frac{\bar{\alpha}^2}{\bar{\alpha} - \bar{\beta}}$ be a Matsumoto metric on a manifold M with dimension $n\geq 3$, where $\alpha$ and $\bar{\alpha}$ are two Riemannian metrics, $\beta$ and $\bar{\beta}$ are two nonzero collinear 1-forms. Then $F$ and $\bar{F}$ have the same Douglas tensor if and only if they are all Douglas metrics.
\end{lemma}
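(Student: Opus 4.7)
The forward direction is immediate: if both $F$ and $\bar F$ are Douglas metrics then $D^i_{jkl}=0=\bar D^i_{jkl}$, so trivially the two Douglas tensors coincide.

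For the converse, I would start from the hypothesis $D^i_{jkl}=\bar D^i_{jkl}$, which by (\ref{P.2.9}) guarantees local functions $H^i_{jk}(x)$ with
\[
H^i_{00}=T^i-\bar T^i-\tfrac{1}{n+1}\bigl(T^m_{y^m}-\bar T^m_{y^m}\bigr)y^i,
\]
whose left-hand side is a quadratic form in $y$. The plan is to substitute the explicit $Q,\Psi$ from (\ref{P.3.1}) and the explicit $\bar Q,\bar\Psi$ from (\ref{P.3.2}) into (\ref{P.2.6})--(\ref{P.2.8}) and their barred analogues; the collinearity hypothesis lets me write $\bar b_i=\lambda(x)b_i$, so $\bar\beta=\lambda\beta$ and the two preferred directions coincide. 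Clearing denominators then produces one rational identity in $y$ whose numerator must reproduce the quadratic $H^i_{00}$.

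The decisive step is to separate the two intrinsic irrationalities in this identity. For $F$, since $Q$ and $\Psi$ depend rationally on $s^2=\beta^2/\alpha^2$ through $(1-ks^2)$ and $(1+2kb^2-3ks^2)$, substituting $s=\beta/\alpha$ and clearing yields a decomposition
$T^i-\tfrac1{n+1}T^m_{y^m}y^i=P^i_{(1)}+\alpha P^i_{(2)}$ with $P^i_{(1)},P^i_{(2)}$ polynomial in $y$. For $\bar F$ the Matsumoto factor $(1-2\bar s)$ becomes $\bar\alpha-2\bar\beta$, genuinely linear in $\bar\alpha$; rationalising against its conjugate produces an analogous decomposition $\bar T^i-\tfrac1{n+1}\bar T^m_{y^m}y^i=\bar P^i_{(1)}+\bar\alpha\bar P^i_{(2)}$. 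Because $\alpha$ and $\bar\alpha$ are square roots of two generic positive-definite quadratic forms, they are rationally independent over the polynomial ring in $y$, so matching the quadratic $H^i_{00}$ against $P^i_{(1)}-\bar P^i_{(1)}+\alpha P^i_{(2)}-\bar\alpha\bar P^i_{(2)}$ forces the coefficients of $\alpha$ and of $\bar\alpha$ to vanish separately.

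The vanishing of the $\bar\alpha$-irrational part is exactly the Douglas criterion for the Matsumoto metric (the preceding lemma characterising $\bar F=\bar\alpha^2/(\bar\alpha-\bar\beta)$ as Douglas iff $\bar b_{i|j}=0$), so $\bar F$ is Douglas. The simultaneous vanishing of the $\alpha$-irrational part and the matching of the rational parts gives a polynomial system on the covariant derivatives of $\beta$ which, after invoking the general $(\alpha,\beta)$-Douglas characterisation with $\phi(s)=1+\epsilon s+ks^2$, forces $F$ to be Douglas as well. The main obstacle I anticipate is the combinatorial book-keeping after rationalisation — in particular verifying that collinearity $\bar\beta=\lambda\beta$ prevents the $\alpha$-part of $T^i$ from being absorbed into the $\bar\alpha$-part of $\bar T^i$ through a non-generic coincidence — but once the clean split into $\alpha$-rational/irrational and $\bar\alpha$-rational/irrational pieces is in hand, reading off the coefficients against the two preceding Douglas characterisation lemmas completes the proof.
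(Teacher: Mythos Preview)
Your overall strategy is in the right spirit, but it is both more laborious than necessary and contains a genuine gap.

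The gap is the assertion that $\alpha$ and $\bar\alpha$ are ``rationally independent over the polynomial ring in $y$.'' Nothing in the hypotheses prevents $\bar a_{ij}=c(x)\,a_{ij}$, in which case $\bar\alpha=\sqrt{c}\,\alpha$ and your separation of the $\alpha$-irrational and $\bar\alpha$-irrational pieces collapses: from $\alpha P^i_{(2)}-\bar\alpha\bar P^i_{(2)}$ being polynomial you would then only obtain $P^i_{(2)}=\sqrt{c}\,\bar P^i_{(2)}$, not that each vanishes separately. Collinearity of $\beta$ and $\bar\beta$ does nothing to rule this out, and your claim that ``the vanishing of the $\bar\alpha$-irrational part is exactly the Douglas criterion for the Matsumoto metric'' is asserted rather than proved.

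More importantly, the paper sidesteps this entirely via a logical shortcut you are missing: it suffices to show that $\bar F$ \emph{alone} is a Douglas metric. Once $\bar D^i_{jkl}=0$, the hypothesis $D^i_{jkl}=\bar D^i_{jkl}$ gives $D^i_{jkl}=0$ for free, so $F$ is Douglas without any need to invoke the $(\alpha,\beta)$-Douglas ODE for $\phi(s)=1+\epsilon s+ks^2$. The paper therefore concentrates solely on the $\bar F$-side. After clearing denominators in (\ref{P.2.9}) and separating by parity of the power of $\alpha$, it tracks which terms in the resulting polynomial identity fail to carry a factor of $\bar\alpha$; the sole survivor is (a nonzero constant times) $y^i\bar\beta^{4}\bar r_{00}\cdot\beta^{8}$, which must consequently be divisible by $\bar\alpha^{2}$. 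Since $\bar\beta^{2}$ cannot contain $\bar\alpha^{2}$ as a factor, one is reduced to $\bar\beta\,\bar r_{00}$ being divisible by $\bar\alpha^{2}$, and a short rank argument (this is where $n\ge 3$ enters) forces $\bar r_{00}=0$, hence $\bar b_{i|j}=0$. By the Matsumoto Douglas characterisation quoted just before this lemma, $\bar F$ is Douglas, and the proof is complete. This is a one-sided $\bar\alpha$-divisibility argument rather than a two-sided independence argument, and it makes the step you flagged as the ``main obstacle'' disappear.
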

\begin{proof}
First, we prove the sufficient condition. Let F and $\bar{F}$ be Douglas metrics and corresponding Douglas tensors be $D^i_{jkl}$ and $\bar{D}^i_{jkl}$. Then by the definition of Douglas metric, we have $D^i_{jkl} =0$ and $\bar{D}^i_{jkl} = 0$, i.e., both F and $\bar{F}$ have the same Douglas tensor.\\
Next, we have the necessary condition. If F and $\bar{F}$ have the same Douglas tensor, then (\ref{P.2.9}) holds. Substituting (\ref{P.3.1}) and (\ref{P.3.2}) in (\ref{P.2.9}), we obtain
\begin{eqnarray*}
H^i_{00} &=& \frac{A^i \alpha^9 + B^i \alpha^8 + C^i \alpha^7 + D^i \alpha^6 + E^i \alpha^5 + F^i \alpha^4 + H^i \alpha^3 + P^i \alpha^2 + Q^i}{I\alpha^8 + J\alpha^6 + K\alpha^4 + L\alpha^2 + M }-\\&&\frac{\bar{A}^i \bar{\alpha}^6 + \bar{B}^i \bar{\alpha}^5 + \bar{C}^i \bar{\alpha}^4 + \bar{D}^i \bar{\alpha}^3 + \bar{E}^i \bar{\alpha}^2 + \bar{F}^i \bar{\alpha} + \bar{H}^i}{\bar{I}\bar{\alpha}^5 + \bar{J}\bar{\alpha}^4 + \bar{K}\bar{\alpha}^3 + \bar{L}\bar{\alpha}^2 + \bar{M} \bar{\alpha}},
\end{eqnarray*}
\begin{equation}
\label{P.3.4}
\end{equation}
where
\begin{eqnarray*}
A^i&=& (1 + 2kb^2)\{\epsilon (1 + 2kb^2) s^i_0 - 2 \epsilon k s_0 b^i\},\\
B^i&=& (1 +2kb^2 )\{ 2k ^2 (1 +2kb^2 )\beta s^i_0 - 2k( 2k b^i \beta + \mu y^i)s_0\\&& - 2 \mu k  y^i r_0 - k r_{00} b^i \},\\
C^i&=& \bigl\{ -( 7 + 2 k b^2) (1 + 2k b^2 )\epsilon k s^i_0 \beta^2 + 4 \epsilon ( 2+ kb^2) k^2 b^i \beta^2 s_0 - 12\mu \epsilon k^2 \beta s_0 b^2 y^i \bigr\},\\
D^i&=& (-14 - 4kb^2 ) (1 + 2k b^2) k^2 \beta^3 s^i_0 + 8 k^3 (2 + k b^2) b^i \beta^3 s_0 + (3k^2 b^i \beta^2 \\&&- 6 \mu k^2 b^2 \beta y^i)r_{00} + \mu k^2 (10 + 8 kb^2) r_0 y^i \beta^2 + \mu k^2 (10 + 32 k b^2) \beta^2 s_0 y^i ,\\
E^i &=& - 8 \epsilon k^2 \beta^4 s^i_0 - 6\epsilon k^3 \beta^4 s_0 b^i - 12 \mu \epsilon k^2 (1 + kb^2) \beta^3 s_0,\\
F^i &=& k^2 \beta^3\{ 18 k(1 + kb^2) \beta^2 s^i_0 -\mu k(14 + 4 k b^2) \beta r_0 y^i\\&& -12k \beta (4y^i + 5 k b^i \beta)s_0 - (1+2kb^2) (kb^i\beta + 6\mu y^i) r_{00}\} ,\\
H^i &=& 3 \epsilon k^3 \beta^5 \{ 3 \beta s^i_0 + 4 \mu s_0 y^i\},\\
P^i &=& 3 k^3 \beta^5 \Bigl[ - \bigl\{ kb^i \beta + 2 \mu (2 + kb^2) y^i\bigr\} r_{00} + 2 k\beta \bigl\{-3s^i_0 \beta + \mu (5s_0 + r_0) y^i \bigr\} \Bigr],\\
Q^i &=& 6 \mu k^4 \beta^7 r_{00} y^i,
\end{eqnarray*}
\begin{eqnarray*}
\bar{A}^i &=& -(1 + 2\bar{b}^2) \{2\bar{b}^i \bar{s}_0 - (1+2\bar{b}^2) \bar{s}^i_0 \},\\
\bar{B}^i &=& (1+ 2\bar{b}^2) [- 4\bar{\beta} (2 + b^2)\bar{s}^i_0 + \bar{b}^i \bar{r}_{00} - 2 \mu y^i\{(1+ 2\bar{b}^2)\bar{s}_0 + \bar{r}_0\}] \\&&+ 2(5+ 4\bar{b}^2) \{\bar{b}^2 \mu y^i + \bar{b}^i \beta\}\bar{s}_0,\\
\bar{C}^i &=& 2\bar{\beta} (1+2\bar{b}^2) \{2 (3 \bar{\beta} \bar{s}^i_0- \bar{b}^i\bar{r}_{00}) + \mu y^i (7 \bar{s}_0 + 4\bar{r}_0)\} + 3[3 \bar{\beta}^2 \bar{s}^i_0 \\&&- \mu y^i \{\bar{b}^2 \bar{r}_{00} + 2 \bar{\beta} (4\bar{b}^2 \bar{s}_0 - \bar{r}_0)\}],\\
\bar{D}^i &=& -2 \bar{\beta} \{19 \bar{\beta}^2 \bar{s}^i_0 - 8 \bar{b}^i \bar{\beta}(\bar{b}^2 + 2) \bar{r}_{00} + 2\mu y^i(19 \bar{\beta} \bar{s}_0 + \\&& 24 \bar{\beta} \bar{r}_0 + 8 \bar{b}^2 \bar{\beta} \bar{s}_0 - 6\bar{b}^2 \bar{r}_{00})\},\\
\bar{E}^i &=& -3\bar{\beta}^2 [4\bar{b}^i \bar{\beta} \bar{r}_{00} + \mu y^i \{(4\bar{b}^2-1) \bar{r}_{00} - 4 \bar{\beta} (3\bar{s}_0 + 2\bar{r}_0)\}],\\
\bar{F}^i &=& -12\mu y^i \bar{\beta}^3 \bar{r}_{00},\\
\bar{H}^i &=& 12\mu y^i \bar{\beta}^4 \bar{r}_{00},\\
\mu &=& \frac{1}{ n + 1}.
\end{eqnarray*}
\begin{equation}
\label{P.3.5}
\end{equation}
and
\begin{eqnarray*}
I &=& (2kb^2 + 1)^2,\\
J &=& -4k (1 + 2kb^2) (2 + kb^2) \beta^2,\\
K &=& k^2 \beta^4 (22 + 38 kb^2 + 4k^2 b^4),\\
L &=& -12k^3 \beta^6 (b^2 k + 2 ),\\
M &=& 9k^4 \beta^8,
\end{eqnarray*}
\begin{eqnarray*}
\bar{I} &=& (1 +2\bar{b}^2 )^2,\\
\bar{J} &=& -2 \bar{\beta} \{5 + 2\bar{b}^2 (7 + 4\bar{b}^2)\},\\
\bar{K} &=& \bar{\beta}^2 \{ 37 + 16\bar{b}^2 (\bar{b}^2 + 4)\},\\
\bar{L} &=& -12\bar{\beta}^3 (4 \bar{b}^2 + 5),\\
\bar{M} &=& 36 \bar{\beta}^4.
\end{eqnarray*}
\begin{equation}
\label{P.3.6}
\end{equation}
Then (\ref{P.3.4}) is equivalent to 
\begin{eqnarray*}
&&H^i_{00} (I\alpha^8 + J\alpha^6 + K\alpha^4 + L\alpha^2 + M)(\bar{I}\bar{\alpha}^5 + \bar{J}\bar{\alpha}^4 + \bar{K}\bar{\alpha}^3 + \bar{L}\bar{\alpha}^2 + \bar{M}\bar{\alpha}) \\&&= (A^i \alpha^9 + B^i \alpha^8 + C^i \alpha^7 + D^i \alpha^6 + E^i \alpha^5 + F^i \alpha^4 + H^i\alpha^3 + P^i \alpha^2 + Q^i)\\&&
(\bar{I}\bar{\alpha}^5 + \bar{J}\bar{\alpha}^4 + \bar{K}\bar{\alpha}^3 + \bar{L}\bar{\alpha}^2 + \bar{M}\bar{\alpha}) -  (\bar{A}^i \bar{\alpha}^6 + \bar{B}^i \bar{\alpha}^5 + \bar{C}^i \bar{\alpha}^4 + \bar{D}^i \bar{\alpha}^3 + \bar{E}^i \bar{\alpha}^2 +\\&& \bar{F}^i \bar{\alpha} + \bar{H}^i)(I\alpha^8 + J\alpha^6 + K\alpha^4 + L\alpha^2 + M),
\end{eqnarray*}
or
\begin{eqnarray*}
&&H^i_{00} l (I\alpha^8 + J\alpha^6 + K\alpha^4 + L\alpha^2 + M) = l(A^i \alpha^9 + B^i \alpha^8 + C^i \alpha^7 + D^i \alpha^6 +\\&& E^i \alpha^5 + F^i \alpha^4 + H^i\alpha^3 + P^i \alpha^2 + Q^i) - m (I\alpha^8 + J\alpha^6 + K\alpha^4 + L\alpha^2 + M),
\end{eqnarray*}
where
\begin{eqnarray}
l &=& (\bar{I}\bar{\alpha}^5 + \bar{J}\bar{\alpha}^4 + \bar{K}\bar{\alpha}^3 + \bar{M}\bar{\alpha}),\\
m &=& (\bar{A}^i \bar{\alpha}^6 + \bar{B}^i \bar{\alpha}^5 + \bar{C}^i \bar{\alpha}^4 + \bar{D}^i \bar{\alpha}^3 + \bar{E}^i \bar{\alpha}^2 + \bar{F}^i \bar{\alpha} + \bar{H}^i).
\end{eqnarray}
This can be written as
\begin{eqnarray*}
&& lA^i\alpha^9 + (l B^i -mI - H^i_{00} l I)\alpha^8 + l C^i \alpha^7 + (lD^i -m J- H^i_{00} l J) \alpha^6\\&& + lE^i\alpha^5 + (l F^i -mK - H^i_{00} l K)\alpha^4 + l H^i \alpha^3 + (l P^i -mL - H^i_{00} lL)\alpha^2\\&&+ (l Q^i -m M - H^i_{00} l M) = 0,
\end{eqnarray*}
which implies
\begin{eqnarray*}
&& lA^i\alpha^9 + l C^i \alpha^7+ lE^i\alpha^5 + l H^i \alpha^3 =- (l B^i -mI - H^i_{00} l I)\alpha^8  \\&&- (lD^i -m J- H^i_{00} l J) \alpha^6  - (l F^i -mK - H^i_{00} l K)\alpha^4  - (l P^i -mL - H^i_{00} lL)\alpha^2\\&&- (l Q^i -m M - H^i_{00} l M).
\end{eqnarray*}
%\begin{eqnarray*}
%&& lA^i\alpha^7  + l C^i \alpha^5 + lE^i\alpha^3 = ( l B^i -mI - H^i_{00} l I) \alpha^6 + (lD^i -m J- H^i_{00} l J) \alpha^4 \\&&+ (l F^i -mK - H^i_{00} l K) \alpha^2 + (l H^i -mL - H^i_{00} l L)
%\end{eqnarray*}
\begin{equation}
\label{P.3.10}
\end{equation}
Since $l\ =\ \bar{\alpha}(\bar{I}\bar{\alpha}^4 + \bar{J}\bar{\alpha}^3 + \bar{K}\bar{\alpha}^2 + \bar{M})$, 
from (\ref{P.3.10}), we can see that R. H. S. has a factor of $\bar{\alpha}$. Here, $m M$ is the only term not containing $\bar{\alpha}$. This implies that $\bar{H}^i M$ must be a factor of $\bar{\alpha}$ i.e., $12 \mu\bar{y}^i\bar{\beta}^4 \bar{r}_{00}$ has a factor of $\bar{\alpha}$. Since $\bar{\beta}^2$ has no factor of $\bar{\alpha}$, the only possibility is that $\bar{\beta}\bar{r}_{00}$ has the factor $\bar{\alpha}^2$. Then for each i there exists a scalar function $\tau^i = \tau (x)$ such that $\bar{\beta}= \tau^i \alpha^2$ which is equivalent to\\
$$b_j r_{0k} + 2b_k r_{j0} = 2 \tau^i a_{jk}.$$
When n $<$ 2 and we assume that $\tau^i \neq 0$, then  
\begin{eqnarray*}
2 &\geq& rank(b_j r_{0k}) + rank(b_k r_{0j})\\
&>& rank(b_j r_{0k} + b_k r_{0j})\\
&=& rank (2 \tau^i a_{jk}) > 2,
\end{eqnarray*}
\begin{equation}
\label{P.3.11}
\end{equation}
which is impossible unless $\tau^i = 0$. Then $\bar{\beta}\bar{r}_{00} = 0$. Since $\bar{\beta} \neq 0$, we have $\bar{r}_{00} = 0$ implies $\bar{b}_{i|j} = 0$ i. e., $\bar{\beta}$ is closed.\\
It is well known that Matsumoto metric $\bar{F} = \frac{\bar{\alpha}^2}{\bar{\alpha} - \bar{\beta}}$ is a Douglas metric if and only if $\bar{b}_{i|j} = 0$. So $\bar{F}$ is a Douglas metric. Since F and $\bar{F}$ have the same Douglas tensor, they are Douglas metrics. Hence the Lemma is proved completely.
\end{proof}
Now, we prove the following main theorem:

\begin{theorem}\label{P.them.1}
The Finsler metric F = $\alpha + \epsilon \beta+ k \frac{\beta^2}{\alpha}$ ($\epsilon$ and k $\neq 0$ are constants) is projectively related to $\bar{F}= \frac{\bar{\alpha}^2}{\bar{\alpha} -\bar{\beta}}$ if and only if the following conditions are satisfied.
\begin{eqnarray*}
G^i_\alpha &=& G^i_{\bar{\alpha}} + \theta y^i - 2k \tau \alpha^2 b^i,\\
b_{i|j} &=& 2\tau \{(1 + 2kb^2) a_{ij} - 3k b_ib_j\},\\
d\bar{\beta} &=& 0\ or\ \bar{b}_{i|j} =0,
\end{eqnarray*}
\begin{equation}
\label{P.given}
\end{equation}
where b = $\textbardbl \beta \textbardbl_{\alpha}$, $\tau = \tau(x)$ is a scalar function, $\theta = \theta_i y^i$ is a 1-form on M and $b_{i|j}$ denote the coefficients of the covariant derivatives of $\beta$ with respect to $\alpha$.
\end{theorem}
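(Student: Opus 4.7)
The plan is to prove both directions of the equivalence, with the necessity direction resting on Lemma \ref{P.lem.1} together with the two Douglas-type characterizations from [\ref{PC-7}] and [\ref{PC-8}], and the sufficiency direction following by direct substitution into (\ref{P.2.4}). Since the Douglas tensor is projectively invariant, projective equivalence of $F$ and $\bar{F}$ forces them to share the same Douglas tensor, and Lemma \ref{P.lem.1} then guarantees that both metrics are of Douglas type.

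For the necessity direction I would first invoke the Matsumoto Douglas lemma of [\ref{PC-8}] to deduce $\bar{b}_{i|j}=0$, equivalently $d\bar{\beta}=0$, which is the third displayed condition. Next I would substitute $\phi(s)=1+\epsilon s+ks^2$ into the Li--Shen ODE of [\ref{PC-7}]. Here $\phi''=2k$ and $\phi-s\phi'=1-ks^2$, so the ODE becomes
\begin{equation*}
2k\bigl[1+(k_1+k_2 s^2)s^2+k_3 s^2\bigr]=(k_1+k_2 s^2)(1-ks^2),
\end{equation*}
and matching the constant, $s^2$, and $s^4$ coefficients gives the unique solution $k_1=2k$, $k_2=0$, $k_3=-3k$. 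Substitution into the Lemma's formula (with $\tau:=\sigma$) yields the second condition. Because $b_{i|j}$ is now symmetric, $s_{ij}=s_0=s^i_0=0$, and (\ref{P.2.4}) for $F$ reduces to $G^i=G^i_\alpha+r_{00}\Psi b^i+r_{00}\Theta\alpha^{-1}y^i$. A direct computation using $r_{00}=2\tau\{(1+2kb^2)\alpha^2-3k\beta^2\}$ together with $\Psi$ from (\ref{P.3.1}) shows that the common factor $(1+2kb^2)\alpha^2-3k\beta^2$ cancels, giving $r_{00}\Psi b^i=2k\tau\alpha^2 b^i$. Meanwhile $\bar{b}_{i|j}=0$ yields $\bar{G}^i=G^i_{\bar{\alpha}}$, so inserting both into the projective relation $G^i=\bar{G}^i+Py^i$ and rearranging produces
\begin{equation*}
G^i_\alpha-G^i_{\bar{\alpha}}+2k\tau\alpha^2 b^i=\bigl(P-r_{00}\Theta\alpha^{-1}\bigr)y^i;
\end{equation*}
since the left side is a polynomial of degree two in $y$, the bracketed coefficient on the right must be a 1-form $\theta=\theta_iy^i$, giving the first condition.

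For sufficiency, given the three conditions I would reverse the computation: condition~2 provides the symmetry of $b_{i|j}$ and the collapse $r_{00}\Psi b^i=2k\tau\alpha^2 b^i$, condition~3 gives $\bar{G}^i=G^i_{\bar{\alpha}}$, and condition~1 then combines these to yield $G^i-\bar{G}^i=(\theta+r_{00}\Theta\alpha^{-1})y^i$, a scalar multiple of $y^i$ with coefficient homogeneous of degree one in $y$; this is precisely the projective equivalence criterion (\ref{PC.10.10}). The main obstacle I anticipate is the book-keeping behind the claim that $P-r_{00}\Theta\alpha^{-1}$ is genuinely polynomial (hence a 1-form) in $y$: the factor $\Theta$ carries the irrational denominator $(1+\epsilon s+ks^2)^{-1}$ and $\alpha^{-1}$ is itself irrational, so the cancellation relies essentially on the fact that the polynomial left-hand side of the displayed equation leaves no other option.
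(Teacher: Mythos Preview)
Your proposal is correct and follows essentially the same route as the paper: Lemma~\ref{P.lem.1} forces both metrics to be Douglas, the characterizations from [\ref{PC-7}] and [\ref{PC-8}] give the second and third conditions, and the spray comparison $G^i_\alpha-G^i_{\bar\alpha}+2k\tau\alpha^2 b^i=(P-r_{00}\Theta\alpha^{-1})y^i$ with the polynomiality argument yields the first; the paper simply quotes the Douglas condition $b_{i|j}=2\tau\{(1+2kb^2)a_{ij}-3kb_ib_j\}$ from [\ref{PC-7}] without rederiving $k_1,k_2,k_3$ as you do, and writes $r_{00}\Theta\alpha^{-1}$ in the explicit form $\tau(\epsilon\alpha^3-3\epsilon k\alpha\beta^2-4k^2\beta^3)/(\alpha^2+\epsilon\alpha\beta+k\beta^2)$, but these are the same expressions and the logic is identical.
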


\begin{proof}
We first show the necessity. Since F is projectively equivalent to $\bar{F}$, they have the same Douglas tensor. By Lemma \ref{P.lem.1}, we obtain that F and $\bar{F}$ are Douglas metrics. By [\ref{PC-7}], we know that the $(\alpha, \beta )$- metric F= $\alpha + \epsilon \beta + k \frac{\beta^2}{\alpha}$ is a Douglas metric if and only if
\begin{eqnarray}\label{P.3.14}
b_{i|j} = 2 \tau \{(1+ 2kb^2) a_{ij} - 3 k b_ib_j\},
\end{eqnarray}
where $\tau = \tau (x)$ is a scalar function on M. In this case, $\beta$ is closed. Plugging (\ref{P.3.14}) and  (\ref{P.3.1}) into (\ref{P.2.4}) yields
\begin{eqnarray}\label{P.3.15}
G^i = G^i_\alpha + \frac{(\epsilon \alpha^3 - 3 \epsilon k \alpha \beta^2 - 4k^2 \beta^3)}{\alpha^2 + \epsilon \alpha \beta + k\beta^2}\tau y^i + 2k\tau \alpha^2 b^i,
\end{eqnarray}
On the other hand, plugging (\ref{P.3.2}) into (\ref{P.2.4}), we get
\begin{equation}\label{P.3.16}
\bar{G}^i = G^i_{\bar{\alpha}} + \frac{\bar{\alpha}^2 s^i_0}{\bar{\alpha} - 2 \bar{\beta}} + \Bigl[\frac{-2\bar{\alpha}^2 \bar{s}_0 }{\bar{\alpha} - 2 \bar{\beta} + \bar{r}_{00} }\Bigr] \Bigl[\frac{2 \bar{\alpha}^2 \bar{b}^i + (\bar{\alpha} - 4 \bar{\beta})y^i}{ 2 \bar{\alpha} (\bar{\alpha} + 2 \bar{\alpha} \bar{b}^2 - 3 \bar{\beta} )}\Bigr].
\end{equation}
By the projective equivalence of F and $\bar{F}$ again, there is a scalar function P = $P(x,y)$ on $TM \backslash \{0\}$ such that 
\begin{equation}\label{P.3.17}
G^i = \bar{G}^i + P y^i.
%G^i - \bar{G}^i = P y^i,
\end{equation}
Since $\bar{b}_{i|j}$= 0, implies $\bar{s}^i_0 =0$, from (\ref{P.3.16}), we get
\begin{equation}\label{P.3.17.1}
\bar{G}^i = G^i_{\bar{\alpha}}.
\end{equation}
Putting (\ref{P.3.15}), (\ref{P.3.17.1}) in (\ref{P.3.17}), we have 
\begin{eqnarray*}
G^i_{\alpha}= G^i_{\bar{\alpha}} - \frac{(\epsilon \alpha^3 - 3 \epsilon k \alpha \beta^2 - 4k^2 \beta^3)}{\alpha^2 + \epsilon \alpha \beta + k\beta^2}\tau y^i - 2k\tau \alpha^2 b^i + Py^i,
\end{eqnarray*}
which implies
\begin{equation}\label{P.3.18}
\Bigl(P - \frac{(\epsilon \alpha^3 - 3 \epsilon k \alpha \beta^2 - 4k^2 \beta^3)}{\alpha^2 + \epsilon \alpha \beta + k\beta^2}\tau \Bigr)y^i  = G^i_{\alpha} - G^i_{\bar{\alpha}} + 2k\tau \alpha^2 b^i.
\end{equation}
Note that the right side of (\ref{P.3.18}) is a quadratic in y. Then there exists a 1- form $\theta = \theta_i(x) y^i$ on M, such that
\begin{eqnarray}
P - \frac{(\epsilon \alpha^3 - 3 \epsilon k \alpha \beta^2 - 4k^2 \beta^3)}{\alpha^2 + \epsilon \alpha \beta + k\beta^2}\tau = \theta,\\
\end{eqnarray} 
which implies that
\begin{equation*}
G^i_{\alpha} - G^i_{\bar{\alpha}} + 2k\tau \alpha^2 b^i = \theta y^i,
\end{equation*}
or
\begin{equation}\label{P.3.19}
G^i_{\alpha} = G^i_{\bar{\alpha}} + \theta y^i -2k\tau \alpha^2 b^i.
\end{equation}
This completes the proof of necessity.\\
\\
Conversely, from (\ref{P.3.15}), (\ref{P.3.16}) and (\ref{P.given}), we have
\begin{equation}
G^i = G^i_{\bar{\alpha}} + \Bigl(\theta + \frac{(\epsilon \alpha^3 - 3 \epsilon k \alpha \beta^2 - 4k^2 \beta^3)}{\alpha^2 + \epsilon \alpha \beta + k\beta^2}\tau\Bigr)y^i.
\end{equation}
Thus F is projectively equivalent to $\bar{F}$. This completes the proof of the Theorem \ref{P.them.1}.
\end{proof}

\section{Isotropic Berwald Curvature and S-curvature}
The Berwald tensor of Finsler metric F with the spray coefficients $G^i$ is defined by \textbf{B}$_y$ $= B^i_{jkl} dx^j \otimes \partial_i \otimes dx^k \otimes dx^l$,
 where $B^i_{jkl} = \frac{\partial^3 G^i}{\partial y^j \partial y^k \partial y^l}$. A Finsler metric F is of isotropic Berwald curvature if
\begin{equation}
B^i_{jkl} = c(F_{y^jy^k} \delta^i_l + F_{y^k y^l}\delta^i_j + F_{y^j y^k y^l y^i}),
\end{equation}
where c = $c(x)$ is a scalar function on M [\ref{PC-33}]. The mean Berwald tensor \textbf{E}$_y$ = $E_{ij}dx^i \otimes
dx^j$ is defined by
\begin{equation}
E_{ij} = \frac{1}{2} B^m_{mij} = \frac{1}{2} \frac{\partial^2}{\partial y^i \partial y^j}\Bigl(\frac{\partial G^m}{\partial y^m}\Bigr).
\end{equation}
A Finsler metric $F(x)$ is of isotropic mean Berwald curvature if
\begin{equation}
E_{ij} = \frac{(n+1)}{2} c F_{y^iy^j},
\end{equation}
where c=$c(x)$ is a scalar function on M. Clearly, the Finsler metric of isotropic Berwald curvature must be of isotropic mean Berwald curvature [\ref{PC-12.12}].\\

A Finsler metric F is said to have isotropic S-curvature, if \textbf{S} $= (n + 1) c(x) F$, for some scalar function $c(x)$ on M [\ref{PC-33}].

\begin{lemma} ([\ref{PC-4.4}]) \label{PC-L.3.1}
For an $(\alpha, \beta)$-metric $F = \alpha + \epsilon \beta + k\frac{\beta^2}{\alpha}$ on an n-dimensional manifold M, where $\epsilon$ and $k\neq 0$ are constants, the following are equivalent:\\
\ \ (a).\ F has isotropic S-curvature, i. e., \textbf{S} $= (n+1) c F$;\\
\ \ (b).\ F is of isotropic mean Berwald curvature, $E_{ij} = \frac{(n+1)}{2} c F_{y^i y^j}$;\\
\ \ (c).\ F is a weak Berwald metric, i. e., \textbf{E} = 0;\\
\ \ (d).\ F has vanished S-curvature, i. e., \textbf{S} = 0;\\
\ \ (e).\ $\beta$ is  a killing 1-form of constant length with respect to $\alpha$, i. e., $r_{00} = s_0 = 0$, where $c\ =\ c(x)$ is a scalar function.
\end{lemma}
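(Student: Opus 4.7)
The plan is to close the cycle of implications (d) $\Rightarrow$ (a) $\Rightarrow$ (b) $\Rightarrow$ (c), together with (e) $\Leftrightarrow$ (d), in which several arrows are immediate and only the arrows involving (e) carry real content. The implications (d) $\Rightarrow$ (a) and (c) $\Rightarrow$ (b) follow by taking $c=0$. For (a) $\Rightarrow$ (b) I would differentiate the identity $\mathbf{S} = (n+1)c(x)F$ twice with respect to $y$ and use the standard relation $E_{ij} = \tfrac{1}{2}\,\partial^{2}\mathbf{S}/\partial y^{i}\partial y^{j}$ (valid with the Busemann--Hausdorff volume form, since the correction term appearing in $\mathbf{S}$ is linear in $y$ and dies under the second derivative). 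Thus the substance reduces to showing (e) $\Rightarrow$ (d), and to the converse that isotropy already forces the isotropy constant to vanish and pins down $r_{00}=s_{0}=0$.

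For (e) $\Rightarrow$ (d), I would invoke the general S-curvature formula for an $(\alpha,\beta)$-metric, which in terms of the $Q,\Theta,\Psi$ of (\ref{P.2.4}) expresses $\mathbf{S}$ as a sum of three terms, each linear in one of $r_{00}$, $s_{0}$, $r_{0}$. For $F=\alpha+\epsilon\beta+k\beta^{2}/\alpha$, evaluation with (\ref{P.3.1}) shows that the $r_{0}$-coefficient is a rational multiple of the other two, and tracing the Killing condition gives $r_{0} = b^{i}r_{i0} = 0$ as well; hence $\mathbf{S}$ collapses to zero, giving (d) outright.

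The main obstacle, and the technical heart of the lemma, is showing that (b) forces $c\equiv 0$ together with $r_{00}=s_{0}=0$. I would compute both sides of $E_{ij} = \tfrac{n+1}{2}\,c\,F_{y^{i}y^{j}}$ explicitly for $F=\alpha+\epsilon\beta+k\beta^{2}/\alpha$, using (\ref{P.2.4}) and the derivative identity (\ref{P.2.8}), so that the difference becomes a rational expression whose denominator is a power of $(1+2kb^{2}-3ks^{2})(1+\epsilon s+ks^{2})$. Clearing denominators produces a polynomial identity in $\alpha$ and $\beta$; because $\alpha$ is irrational with respect to the $y$-variables while $\beta$ is linear in $y$, the identity splits into its even-$\alpha$ and odd-$\alpha$ parts, yielding two polynomial equations in $\beta$. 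Matching coefficients of the successive powers of $\beta$, starting from the top, first forces $c=0$ and then successively $s_{0}=0$ and $r_{00}=0$. This coefficient-matching argument, similar in spirit to the rationality analysis already used in the proof of Lemma \ref{P.lem.1}, is where all of the labour lives; once it is complete, the equivalences follow by the cycle outlined above.
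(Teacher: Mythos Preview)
The paper does not supply its own proof of this lemma: it is quoted verbatim from Cui~[\ref{PC-4.4}] and used as a black box in Section~4. There is therefore nothing in the present paper to compare your argument against.

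That said, your outline is the standard route one finds in the literature for results of this type, and its logical skeleton is sound: the arrows (d)$\Rightarrow$(a), (c)$\Rightarrow$(b), and (a)$\Rightarrow$(b) are indeed formal, (e)$\Rightarrow$(d) follows from the general S-curvature formula for $(\alpha,\beta)$-metrics once $r_{00}=s_{0}=0$ (and hence $r_{0}=0$), and the only substantive step is (b)$\Rightarrow$(e) with $c=0$. Your plan for that step---clear denominators in the identity $E_{ij}=\tfrac{n+1}{2}cF_{y^{i}y^{j}}$, separate the resulting polynomial into its rational and irrational parts in $\alpha$, then match coefficients of powers of $\beta$---is exactly the mechanism used in Cui's paper and in related work. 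One caution: the claim that ``the $r_{0}$-coefficient is a rational multiple of the other two'' is not quite how the cancellation works; rather, $r_{0}$ enters the S-curvature through the distortion term and is eliminated once one uses the explicit form of the Busemann--Hausdorff density for this class of metrics, so that step needs the density computation rather than an algebraic coincidence among the $Q,\Theta,\Psi$ coefficients.
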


\begin{lemma} ([\ref{PC-2.2}]) \label{PC-L.3.2}
For a Matsumoto metric F =$\frac{\alpha^2}{\alpha - \beta}$ on an n-dimensional manifold M, the following are equivalent:\\
\ \ (a).\ F has isotropic S-curvature, i. e., \textbf{S} $= (n+1) c F$;\\
\ \ (b).\ F is of isotropic mean Berwald curvature, $E_{ij} = \frac{(n+1)}{2} c F_{y^i y^j}$;\\
\ \ (c).\ F is a weak Berwald metric, i. e., \textbf{E} = 0;\\
\ \ (d).\ F has vanished S-curvature, i. e., \textbf{S} = 0;\\
\ \ (e).\ $\beta$ is  a killing 1-form of constant length with respect to $\alpha$, i. e., $r_{00} = s_0 = 0$, where $c\ =\ c(x)$ is a scalar function.
\end{lemma}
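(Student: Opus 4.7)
The plan is to prove the lemma as a chain of equivalences $(d) \Rightarrow (a) \Rightarrow (b) \Rightarrow (e) \Rightarrow (d)$ plus $(c) \Leftrightarrow$ (the rest), exploiting the very specific form of the Matsumoto spray data $\bar Q = 1/(1-2s)$, $\bar\Theta = (1-4s)/[2(1+2b^2-3s)]$, $\bar\Psi = 1/(1+2b^2-3s)$ already computed in \eqref{P.3.2}. The implications $(d) \Rightarrow (a)$ (take $c = 0$) and $(c) \Rightarrow (b)$ (take $c = 0$) are immediate, while $(a) \Rightarrow (b)$ is a general Finsler identity obtained by differentiating $\mathbf{S} = (n+1)cF$ twice in $y$ and using $E_{ij} = \tfrac{1}{2}\partial^2_{y^iy^j}\partial_{y^m}G^m$.

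For $(e) \Rightarrow (d)$ I would substitute the Matsumoto data together with $r_{00} = 0$ and $s_0 = 0$ into the general S-curvature formula for an $(\alpha,\beta)$-metric
\[
\mathbf{S} \;=\; \Bigl(2\Psi - \frac{f'(b)}{b\,f(b)}\Bigr)(r_0 + s_0) \;-\; \frac{\Phi}{2\alpha\,\Delta^2}\bigl(r_{00} - 2\alpha Q s_0\bigr),
\]
where $\Delta = 1 + 2kb^2 - 3ks^2$ in the appropriate guise; every surviving term carries a factor of $r_{00}$, $s_0$, or $r_0$, so $\mathbf{S}$ collapses to $0$, giving both $(d)$ and, by differentiation, $\mathbf{E} = 0$, i.e.\ $(c)$.

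The real work is in $(b) \Rightarrow (e)$. Starting from the geodesic coefficients $G^i = G^i_{\bar\alpha} + \bar\alpha \bar Q \bar s^i_{\,0} + (-2\bar Q \bar\alpha\bar s_0 + \bar r_{00})[\bar\Psi \bar b^i + \bar\Theta\bar\alpha^{-1}y^i]$, I would compute $\partial G^m/\partial y^m$ and then its Hessian in $y$, set it equal to $\tfrac{n+1}{2}c\,\bar F_{y^iy^j}$, and clear denominators of the form $(1-2s)^k(1+2\bar b^2 - 3s)^\ell$ to land in a polynomial identity in $y$. The key rationality step is that the left-hand side, after clearing, becomes a polynomial in $y$ while the right-hand side still carries factors of $\bar\alpha$ through $\bar F_{y^iy^j}$; separating rational and irrational pieces in $\bar\alpha$ forces the coefficients of every odd power of $\bar\alpha$ on the right to vanish. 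Careful coefficient comparison in powers of $\bar\beta$ (using that $\bar\alpha^2 = \bar a_{ij}y^iy^j$ is an irreducible quadratic independent of $\bar\beta$ in the polynomial ring) then successively forces $\bar s_0 = 0$, $\bar r_{00} = 0$, and $c(x) = 0$, which yields $(e)$ and simultaneously collapses $(b)$ to $(c)$.

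The main obstacle will be the bookkeeping in this last polynomial extraction: the coefficients $\bar A^i,\bar B^i,\ldots$ in \eqref{P.3.5} already illustrate how intricate the expressions become for a Matsumoto-type denominator, and one must be careful that the $\bar\alpha$-irrationality argument (as used analogously in the proof of Lemma \ref{P.lem.1} after \eqref{P.3.10}) is applied to the right pieces so that no spurious vanishing is enforced. Once $(e)$ is obtained, $(e) \Rightarrow (c)$ follows by the same substitution as in $(e) \Rightarrow (d)$ applied at the level of $E_{ij}$, and the loop closes.
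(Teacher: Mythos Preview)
The paper does not supply its own proof of this lemma: it is quoted from reference~[\ref{PC-2.2}] (Cheng and Lu) and stated without argument, exactly like the companion Lemma~\ref{PC-L.3.1} is quoted from~[\ref{PC-4.4}]. Consequently there is nothing in the present paper to compare your proposal against.

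That said, your outline follows the standard architecture for such equivalence results in the $(\alpha,\beta)$-metric literature: the trivial implications $(d)\Rightarrow(a)$, $(c)\Rightarrow(b)$, the general identity $(a)\Rightarrow(b)$ via $E_{ij}=\tfrac12\mathbf{S}_{y^iy^j}$, and the substantive step $(b)\Rightarrow(e)$ carried out by clearing denominators and separating rational/irrational parts in $\bar\alpha$. One caution on your $(e)\Rightarrow(d)$ step: the term $\bigl(2\Psi - f'(b)/(bf(b))\bigr)(r_0+s_0)$ in the S-curvature formula does not vanish merely because $r_{00}=0$ and $s_0=0$ unless you also observe that $r_{00}=0$ for all $y$ forces $r_{ij}=0$ and hence $r_0=0$; you mention $r_0$ only in passing, so make that explicit. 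Also, your displayed $\Delta = 1+2kb^2-3ks^2$ is the $\Delta$ for the metric of Lemma~\ref{PC-L.3.1}, not for the Matsumoto metric; the correct denominator here is $1+2\bar b^2 - 3s$, as in \eqref{P.3.2}. These are bookkeeping issues rather than conceptual gaps, but since the paper itself defers the proof entirely to~[\ref{PC-2.2}], you should verify the detailed computation against that source.
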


\begin{theorem}
Let F = $\alpha + \epsilon \beta + k\frac{\beta^2}{\alpha}$ ($\epsilon$ and k $\neq$ 0 are constants) be projectively equivalent to $\bar{F} = \frac{\bar{\alpha}^2}{\bar{\alpha} - \bar{\beta}}$ and $\bar{F}$ have isotropic Berwald curvature. Then F has isotropic Berwald curvature if and only if F has isotropic S-curvature.
\end{theorem}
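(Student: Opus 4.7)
The statement is an ``if and only if'' under the standing hypotheses that $F$ and $\bar F$ are projectively equivalent and that $\bar F$ has isotropic Berwald curvature, so my plan is to handle the two implications separately, relying chiefly on Lemma \ref{PC-L.3.1} and Theorem \ref{P.them.1}.

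For the forward direction I would begin from the defining identity
\[
B^i_{jkl}=c\bigl(F_{y^jy^k}\delta^i_l+F_{y^ky^l}\delta^i_j+F_{y^jy^l}\delta^i_k+F_{y^jy^ky^l}y^i\bigr)
\]
and trace on the pair $(i,l)$; after using the homogeneity identity $F_{y^my^iy^j}y^m=-F_{y^iy^j}$, one recovers $E_{ij}=\tfrac{n+1}{2}cF_{y^iy^j}$, i.e.\ $F$ is of isotropic mean Berwald curvature. The equivalence (a)$\Leftrightarrow$(b) in Lemma \ref{PC-L.3.1} then delivers isotropic $S$-curvature, closing this direction without further computation.

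For the converse I would start from $F$ of isotropic $S$-curvature and invoke Lemma \ref{PC-L.3.1}(a)$\Leftrightarrow$(e) to obtain $r_{00}=0$ and $s_0=0$. Because $F$ is projectively related to $\bar F$, Theorem \ref{P.them.1} yields
\[
b_{i|j}=2\tau\{(1+2kb^2)a_{ij}-3kb_ib_j\},
\]
which is symmetric, so $\beta$ is automatically closed and $s_0=0$ holds free of charge. Contracting with $y^iy^j$ gives
\[
0=r_{00}=2\tau\bigl\{(1+2kb^2)\alpha^2-3k\beta^2\bigr\},
\]
and the critical step is to promote this to $\tau\equiv 0$. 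I would argue pointwise: at each $x$ the hyperplane $\ker\beta_x\subset T_xM$ has dimension $n-1\geq 2$, so there exists a nonzero $y\in\ker\beta_x$ with $\alpha(y)\neq 0$; substituting such $y$ collapses the identity to $2\tau(x)(1+2kb^2)\alpha(y)^2=0$, and the admissibility inequality $1+2kb^2>0$ coming from the definition of $b_0(k)$ forces $\tau(x)=0$.

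With $\tau\equiv 0$ one has $b_{i|j}=0$, i.e.\ $\beta$ is $\alpha$-parallel, so that $r_{ij}=s_{ij}=0$. Substituting $Q,\Psi,\Theta$ from (\ref{P.3.1}) into (\ref{P.2.4}) then collapses the spray to $G^i=G^i_\alpha$, which is quadratic in $y$; its Berwald tensor $B^i_{jkl}=\partial_{y^j}\partial_{y^k}\partial_{y^l}G^i_\alpha$ therefore vanishes identically, so $F$ trivially has isotropic Berwald curvature with $c\equiv 0$. The only real obstacle in the whole argument is the pointwise vanishing of $\tau$; once that is in hand, everything else is a mechanical appeal to the paper's earlier results.
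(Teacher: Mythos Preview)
Your forward direction matches the paper's: trace the isotropic Berwald identity to obtain isotropic mean Berwald curvature, then invoke Lemma~\ref{PC-L.3.1}.

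For the converse, however, you take a genuinely different route from the paper. The paper argues purely tensorially from the projective relation $G^i=\bar G^i+Py^i$: passing to mean Berwald tensors gives $E_{ij}=\bar E_{ij}+\tfrac{n+1}{2}P_{y^iy^j}$, and since $F$ has isotropic $S$-curvature (hence isotropic $E$) while $\bar F$ has isotropic Berwald curvature (hence isotropic $\bar E$), one extracts $P_{y^iy^j}=cF_{y^iy^j}-\bar c\,\bar F_{y^iy^j}$. Substituting this and its $y$-derivative into the expansion $B^i_{jkl}=\bar B^i_{jkl}+P_{y^jy^k}\delta^i_l+P_{y^jy^l}\delta^i_k+P_{y^ky^l}\delta^i_j+P_{y^jy^ky^l}y^i$ makes the $\bar c\,\bar F$ terms cancel against $\bar B^i_{jkl}$, leaving the isotropic Berwald form for $F$. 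Your approach instead combines Lemma~\ref{PC-L.3.1}(e) with Theorem~\ref{P.them.1} to force $\tau\equiv 0$, hence $b_{i|j}=0$ and $G^i=G^i_\alpha$, so $B^i_{jkl}\equiv 0$. This is correct and more elementary; it actually yields the stronger conclusion that $F$ is Berwald, and it never uses the hypothesis that $\bar F$ has isotropic Berwald curvature (your argument is essentially the one the paper deploys later for Theorem~4.4). The trade-off is that the paper's computation is a reusable template for arbitrary projectively related pairs once both have isotropic mean Berwald curvature, whereas your argument leans on the specific structural result Theorem~\ref{P.them.1} (and hence on $n\geq 3$).
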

\begin{proof}
Suppose F has isotropic Berwald curvature, then F has isotropic mean Berwald curvature.\\
By Lemma \ref{PC-L.3.1}, F is of isotropic S- curvature. The necessity is proved.\\
We now show the sufficiency.\\
Since F and $\bar{F}$ are projectively equivalent, (\ref{PC.10.10}) holds.\\
Suppose that F has isotropic S-curvature, i. e., 
\begin{equation*}
S = (n + 1) c(x) F.
\end{equation*}
From Lemma \ref{PC-L.3.2}, we get $$E_{ij} = \Bigl(\frac{n+1}{2}\Bigr)c F_{y^iy^j}.$$
Since $\bar{F}$ has isotropic Berwald curvature, we get
\begin{equation}
\bar{B}^i_{jkl} = \bar{c} \{\bar{F}_{y^jy^k}\delta^i_l + \bar{F}_{y^jy^l}\delta^i_k +\bar{F}_{y^ky^l}\delta^i_j + \bar{F}_{y^jy^ky^l}y^i \},
\end{equation}
where $\bar{c} = \bar{c}(x)$ is a scalar function on M. Hence, by the definition of the mean Berwald tensor, it follows from (\ref{PC.10.10}) that
\begin{equation*}
cF_{y^iy^j} = \bar{c}\bar{F}_{y^iy^j} + P_{y^iy^j},
\end{equation*}
which yields that 
\begin{equation*}
cF_{y^iy^jy^k} = \bar{c}\bar{F}_{y^iy^jy^k} + P_{y^iy^jy^k}.
\end{equation*}
We now have
\begin{eqnarray*}
B^i_{jkl} &=& \frac{\partial^3 G^i}{\partial y^j \partial y^k \partial y^l}\\
&=& \bar{B}^i_{jkl} + (P_{y^jy^k} \delta^i_l + P_{y^jy^l} \delta^i_k + P_{y^ky^l} \delta^i_j + P_{y^jy^ky^l} y^i)\\
&=& \bar{c} (\bar{F}_{y^jy^k}\delta^i_l + \bar{F}_{y^jy^l}\delta^i_k +\bar{F}_{y^ky^l}\delta^i_j+\bar{F}_{y^jy^ky^l}y^i )\\
&&+ (P_{y^jy^k}\delta^i_l + P_{y^jy^l}\delta^i_k + P_{y^ky^l}\delta^i_j + P_{y^jy^ky^l}y^i )\\
&=& c (F_{y^jy^k}\delta^i_l + F_{y^jy^l}\delta^i_k + F_{y^ky^l}\delta^i_j+ F_{y^jy^ky^l}y^i ).\\
\end{eqnarray*}
which means that F has isotropic Berwald curvature. We complete the proof.
\end{proof}
We could obtain the following theorem by the above methods.
\begin{theorem}
Let F = $\alpha + \epsilon \beta + k \frac{\beta^2}{\alpha}$ ($\epsilon$ and k $\neq 0$ are constants) be projectively equivalent to $\bar{F} = \frac{\bar{\alpha}^2}{\bar{\alpha} - \bar{\beta}}$ and F has isotropic Berwald curvature. Then $\bar{F}$ has isotropic Berwald curvature if and only if $\bar{F}$ has isotropic S-curvature.
\end{theorem}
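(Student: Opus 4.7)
The plan is to mirror the proof of the preceding theorem, swapping the roles of $F$ and $\bar{F}$ throughout. The structure will be: necessity is immediate from the implication chain in Lemma \ref{PC-L.3.2}; sufficiency follows by using projective equivalence to transfer the isotropic Berwald expression for $F$ to $\bar{F}$ after identifying the second derivatives of the projective factor $P$.

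For the necessity direction, I would argue: if $\bar{F}$ has isotropic Berwald curvature, then, by averaging (contracting) to form the mean Berwald tensor, $\bar{F}$ is of isotropic mean Berwald curvature. Applying Lemma \ref{PC-L.3.2} (which lists isotropic mean Berwald curvature as equivalent to isotropic $S$-curvature for Matsumoto metrics), we conclude that $\bar{F}$ has isotropic $S$-curvature. This part is essentially automatic and should take only a few lines.

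For the sufficiency direction, assume $\bar{F}$ has isotropic $S$-curvature. By Lemma \ref{PC-L.3.2} this gives $\bar{E}_{ij} = \tfrac{n+1}{2}\bar{c}\,\bar{F}_{y^iy^j}$. Because $F$ has isotropic Berwald curvature, it is also of isotropic mean Berwald curvature, so $E_{ij} = \tfrac{n+1}{2}c\,F_{y^iy^j}$. Next I invoke the projective relation $G^i = \bar{G}^i + P y^i$ from (\ref{PC.10.10}) and differentiate three times in $y$ to obtain
\begin{equation*}
B^i_{jkl} = \bar{B}^i_{jkl} + \bigl(P_{y^jy^k}\delta^i_l + P_{y^jy^l}\delta^i_k + P_{y^ky^l}\delta^i_j + P_{y^jy^ky^l}y^i\bigr).
\end{equation*}
Contracting over $i=l$ and using the comparison of the two isotropic mean Berwald expressions yields $c F_{y^iy^j} = \bar{c}\,\bar{F}_{y^iy^j} + P_{y^iy^j}$, and differentiating once more gives $c F_{y^iy^jy^k} = \bar{c}\,\bar{F}_{y^iy^jy^k} + P_{y^iy^jy^k}$. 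Substituting these two identities into the isotropic Berwald expression for $B^i_{jkl}$ and then subtracting the $P$-terms according to the displayed relation above exhibits $\bar{B}^i_{jkl}$ in the form $\bar{c}(\bar{F}_{y^jy^k}\delta^i_l + \bar{F}_{y^jy^l}\delta^i_k + \bar{F}_{y^ky^l}\delta^i_j + \bar{F}_{y^jy^ky^l}y^i)$, which is exactly the isotropic Berwald condition for $\bar{F}$.

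The only mildly subtle point is the bookkeeping when contracting the derivative formula to get the mean Berwald relation, in particular using that $P$ is positively homogeneous of degree one so $P_{y^iy^jy^m}y^m = -P_{y^iy^j}$, giving the clean coefficient $(n+1)$ in $B^m_{mij} = \bar{B}^m_{mij} + (n+1)P_{y^iy^j}$. Once this is in place, every remaining manipulation is algebraic substitution and the conclusion follows without further obstacles; no new curvature identity beyond Lemmas \ref{PC-L.3.1} and \ref{PC-L.3.2} is required.
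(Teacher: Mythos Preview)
Your proposal is correct and follows exactly the route the paper indicates: the paper does not write out a separate proof for this theorem but simply states that it is obtained ``by the above methods,'' i.e., by mirroring the proof of the preceding theorem with the roles of $F$ and $\bar{F}$ interchanged and with Lemma~\ref{PC-L.3.2} in place of Lemma~\ref{PC-L.3.1}. Your bookkeeping on the contraction (using the degree-one homogeneity of $P$ to obtain $B^m_{mij}=\bar{B}^m_{mij}+(n+1)P_{y^iy^j}$) is the only step the paper leaves implicit, and you handle it correctly.
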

For an $(\alpha, \beta)$- metric $F = \alpha + \epsilon \beta + k\frac{\beta^2}{\alpha}$ of isotropic S-curvature, we have the following:
\begin{theorem}
Let $F = \alpha + \epsilon \beta + k\frac{\beta^2}{\alpha}$ ($\epsilon$ and k $\neq 0$ are constants) be an $(\alpha, \beta)$-metric and $\bar{F} = \frac{\bar{\alpha}^2}{\bar{\alpha} - \bar{\beta}}$ be a Matsumoto metric on an n-dimensional manifold M ($n\ \geq\ 3$), where $\alpha$ and $\bar{\alpha}$ are two Riemannian metrics, $\beta = b_iy^i$ and $\bar{\beta}$ are two non zero 1-forms. Assume that F has isotropic S- curvature. Then F is projectively equivalent to $\bar{F}$ if and only if $\bar{\beta }$ is closed and the following conditions hold:\\
\ \ (a).\ $\alpha$ is projectively equivalent to $\bar{\alpha}$;\\
\ \ (b).\ $\beta$ is parallel with respect to $\alpha$, i. e., $b_{i |j} = 0$.\\
\end{theorem}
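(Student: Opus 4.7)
The strategy is to combine Theorem~\ref{P.them.1}, which characterises projective equivalence between $F$ and $\bar F$ in terms of a scalar function $\tau(x)$ and an auxiliary $1$-form $\theta$, with Lemma~\ref{PC-L.3.1}, which identifies isotropic S-curvature of $F$ with the simultaneous vanishing of $r_{00}$ and $s_0$. The whole argument will hinge on using the isotropic S-curvature hypothesis to force $\tau\equiv 0$, after which conditions (a) and (b) fall out of the remaining equations of Theorem~\ref{P.them.1}.

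For the necessity direction, I would begin by invoking Theorem~\ref{P.them.1} to obtain
\[ b_{i|j}=2\tau\{(1+2kb^2)a_{ij}-3kb_ib_j\}, \qquad G^i_\alpha=G^i_{\bar\alpha}+\theta y^i-2k\tau\alpha^2 b^i, \]
together with closedness of $\bar\beta$. Because the right-hand side of the first equation is symmetric in $i,j$, $\beta$ is automatically closed ($s_{ij}=0$), so $s_0=0$ holds without effort and $r_{ij}=b_{i|j}$. Contracting with $y^iy^j$ yields
\[ r_{00}=2\tau\bigl[(1+2kb^2)\alpha^2-3k\beta^2\bigr]. \]
Lemma~\ref{PC-L.3.1} now supplies $r_{00}=0$, which as a polynomial identity in $y$ is equivalent at the tensor level to $\tau\{(1+2kb^2)a_{ij}-3kb_ib_j\}=0$. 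The main step of the proof is a rank argument: if $\tau\neq 0$ then $(1+2kb^2)a_{ij}=3kb_ib_j$, but the left side has rank $n\geq 3$ (by positive definiteness of $a_{ij}$ together with $1+2kb^2>0$ from the regularity of $F$) while the right side has rank at most $1$, a contradiction. Hence $\tau\equiv 0$, which immediately yields (b) $b_{i|j}=0$, and reduces the spray relation to $G^i_\alpha=G^i_{\bar\alpha}+\theta y^i$, i.e.\ the projective equivalence of the Riemannian metrics $\alpha$ and $\bar\alpha$, which is (a).

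For sufficiency, the plan is simply to substitute $\tau=0$ into Theorem~\ref{P.them.1}: (a) provides the required spray relation $G^i_\alpha=G^i_{\bar\alpha}+\theta y^i$, (b) gives $b_{i|j}=0$, and $\bar\beta$ closed is assumed outright; all three hypotheses of Theorem~\ref{P.them.1} are met with $\tau\equiv 0$, so $F$ is projectively related to $\bar F$.

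The only non-routine step is the rank argument forcing $\tau=0$; everything else is assembly from previously established results. The point I would double-check is that the regularity hypothesis $1+2kb^2>0$ is indeed in force (it follows from the assumption $\|\beta\|_\alpha<b_0$ and the definition of $b_0(k)$ given earlier in the paper), since this is precisely what keeps the coefficient of $a_{ij}$ nonzero and secures the rank contradiction in dimension $n\geq 3$.
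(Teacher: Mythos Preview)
Your proposal is correct and follows essentially the same route as the paper: invoke Theorem~\ref{P.them.1}, use the isotropic S-curvature hypothesis (via Lemma~\ref{PC-L.3.1}) to get $r_{00}=0$, deduce $\tau=0$, and read off conditions (a), (b) and closedness of~$\bar\beta$. The only cosmetic difference is that the paper argues $\tau=0$ by noting that $(1+2kb^2)\alpha^2=3k\beta^2$ would force $1+2kb^2=k=0$ (since $\alpha^2$ and $\beta^2$ are linearly independent), whereas you phrase the same obstruction as a rank comparison at the tensor level; your version is arguably cleaner, and you also cite the correct lemma (the paper inadvertently references Lemma~\ref{PC-L.3.2}).
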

\begin{proof}
The sufficiency is obvious from Theorem \ref{P.them.1}.\\
Now, for the necessity, from Theorem \ref{P.them.1}, we have that if F is projectively equivalent to $\bar{F}$, then
\begin{equation}\label{P.3.34}
b_{i|j} = 2 \tau \{(1 + 2 kb^2)a_{ij} - 3kb_ib_j\}.
\end{equation}
Contracting (\ref{P.3.34}) with $y^i$ and $y^j$ yields, 
\begin{equation}
r_{00} = 2 \tau \{(1+ 2b^2) \alpha^2 - 3 k\beta^2\}.
\end{equation}
By Lemma \ref{PC-L.3.2}, if F has isotropic S-curvature, then $r_{00}$ = 0, i. e.,
\begin{equation}
r_{00} = 2 \tau \{(1 + 2b^2)\alpha^2 - 3 k \beta^2\} = 0.
\end{equation}
If possible let $\tau\ \neq\ 0$, then 
\begin{equation}
(1 + 2kb^2) \alpha^2 = 3 k \beta^2.
\end{equation}
Since $\alpha^2$ does not contain $\beta$, we have $(1 + 2kb^2) = k= 0$ which is impossible. Thus $\tau = 0$. By Theorem \ref{P.them.1}, the necessity is obvious. We complete the proof. \\
\end{proof}
Since the Finsler metric of isotropic Berwald curvature must be of isotropic mean Berwald curvature. Thus, by Lemma \ref{PC-L.3.2}, assuming that F has isotropic Berwald curvature, we immediately obtain.\\
\begin{theorem}
Let $F = \alpha + \epsilon \beta + k\frac{\beta^2}{\alpha}$ ($\epsilon$ and k $\neq 0$ are constants) be an $(\alpha, \beta)$-metric and $\bar{F} = \frac{\bar{\alpha}^2}{\bar{\alpha} - \bar{\beta}}$ be a Matsumoto metric on an n-dimensional manifold M ($n\ \geq\ 3$), where $\alpha$ and $\bar{\alpha}$ are two Riemannian metrics, $\beta = b_iy^i$ and $\bar{\beta}$ are two non zero 1-forms. Assume that F has isotropic Berwald curvature. Then F is projectively equivalent to $\bar{F}$ if and only if $\bar{\beta }$ is closed and the following conditions hold:\\
\ \ (a).\ $\alpha$ is projectively equivalent to $\bar{\alpha}$;\\
\ \ (b).\ $\beta$ is parallel with respect to $\alpha$, i. e., $b_{i |j} = 0$.\\
\end{theorem}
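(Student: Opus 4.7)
The plan is to deduce this theorem as an immediate corollary of the preceding S-curvature theorem, by observing that the stronger hypothesis of isotropic Berwald curvature on $F$ forces $F$ to have isotropic S-curvature. Once that reduction is in place, both directions follow from the S-curvature theorem without any new projective-equivalence computation.

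For the sufficiency direction I would simply invoke Theorem \ref{P.them.1}. The three hypothesised conditions --- (a) $\alpha$ projectively equivalent to $\bar\alpha$, (b) $b_{i|j}=0$, and $\bar\beta$ closed --- are exactly the hypotheses of Theorem \ref{P.them.1} specialised to $\tau\equiv 0$: in that case $b_{i|j}=2\tau\{(1+2kb^2)a_{ij}-3kb_ib_j\}$ reduces to $b_{i|j}=0$, the relation $G^i_\alpha=G^i_{\bar\alpha}+\theta y^i-2k\tau\alpha^2 b^i$ reduces to projective equivalence of $\alpha$ and $\bar\alpha$ via the 1-form $\theta$, and $\bar\beta$ is closed by assumption. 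Theorem \ref{P.them.1} then yields that $F$ and $\bar F$ are projectively equivalent.

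For the necessity direction, the key observation (flagged in the sentence immediately preceding the theorem) is that any Finsler metric of isotropic Berwald curvature is automatically of isotropic mean Berwald curvature: tracing the defining identity on the indices $i,l$ gives $2E_{jk}=B^m_{mjk}=(n+1)c\,F_{y^jy^k}$ (the remaining trace term vanishes by Euler homogeneity of $F$). Applying Lemma \ref{PC-L.3.1} to $F=\alpha+\epsilon\beta+k\beta^2/\alpha$ then upgrades isotropic mean Berwald curvature to isotropic S-curvature. Having established that $F$ has isotropic S-curvature, the previous theorem (on projective equivalence under the isotropic S-curvature hypothesis) applies verbatim and delivers (a), (b), and the closedness of $\bar\beta$.

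There is no genuine obstacle here --- the entire argument is a one-line reduction, plus a routine trace to pass from isotropic Berwald to isotropic mean Berwald curvature. The substantive work, namely Lemma \ref{P.lem.1} and Theorem \ref{P.them.1}, has already been carried out; the only thing to check is that the trace step and the appeal to Lemma \ref{PC-L.3.1} are applied to the correct metric, which they are since Lemma \ref{PC-L.3.1} is stated precisely for the class $F=\alpha+\epsilon\beta+k\beta^2/\alpha$ under consideration.
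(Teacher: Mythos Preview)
Your proposal is correct and follows exactly the route the paper takes: the theorem is stated as an immediate corollary of the preceding S-curvature theorem, obtained by noting that isotropic Berwald curvature implies isotropic mean Berwald curvature, and then invoking the lemma to upgrade this to isotropic S-curvature. You even cite the correct lemma (Lemma~\ref{PC-L.3.1}, for the metric $F=\alpha+\epsilon\beta+k\beta^2/\alpha$), whereas the paper's text references Lemma~\ref{PC-L.3.2} at this step, which appears to be a typographical slip.
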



\begin{thebibliography}{100}
\bibitem{1,}
P. L. Antonelli, R. S. Ingarden and M. Matsumoto,\label{PC-1}
\newblock{\em The Theory of sprays and Finsler spaces with applications in Physics and Biology,}
\newblock Kluwer academic Publishers, London (1985).
\bibitem{1,}
S. Bacso and M. Matsumto,\label{PC-2}
\newblock{\em Projective changes between Finsler spaces with $(\alpha, \beta )$-metric,}
\newblock Tensor N. S., \textbf{55} (1994), 252-257.
\bibitem{1,}
X. Y. Cheng and C. Y. Lu,\label{PC-2.2}
\newblock{\em Two kinds of weak Berwald metrics of scalar
flag curvature,}
\newblock J. Math. Res. Exposition, \textbf{29} (4) (2009), 607-614.
\bibitem{1,}
N. W. Cui,\label{PC-4.4}
\newblock{\em On the S- curvature of some $(\alpha, \beta)$ -metrics,}
\newblock Acta Math. Sci., \textbf{26 A} (7) (2006), 1047-1056.
\bibitem{1,}
N. Cui and Yi-Bing Shen,\label{PC-5}
\newblock{\em Projective change between two classes of $(\alpha, \beta )$-metrics,}
\newblock Differential Geom. and its applications, \textbf{27} (2009), 566-573.
\bibitem{1,}
S. S. Chern and Z. Shen,\label{PC-33}
\newblock{\em Riemann-Finsler Geometry,}
\newblock World Scientific, Singapore, (2005).
\bibitem{1,}
B. N. Prasad, B. N. Gupta and D. D. Singh,\label{PC-10}
\newblock{\em Conformal Transformations in Finsler spaces with $(\alpha, \beta)$-metric,}
\newblock Indian J. Pure and Applied Math., \textbf{18} (4) (1987), 290- 301.
\bibitem{1,}
B. Li, Y. Shen and Z. Shen,\label{PC-17}
\newblock{\em On a class of Douglas metrics,}
\newblock Studia Scientiarum Mathematicarum Hungarica, \textbf{46}(3) (2009), 355-365.
\bibitem{1,}
M. Matsumoto,\label{PC-3}
\newblock{\em Foundations of Finsler geometry and special Finsler spaces, }
\newblock Kaiseisha press, Otsu, Saikawa, (1986).
\bibitem{1,}
H. S. Park and IL- Yong Lee,\label{PC-6}
\newblock{\em On projectively flat Finsler spaces with $(\alpha, \beta )$- metric,}
\newblock Comm. Korean Math. Soc., \textbf{14} (2) (1999), 373-383
\bibitem{1,}
H. S. Park and IL- Yong Lee,\label{PC-7}
\newblock{\em The Randers changes of Finsler spaces with $(\alpha, \beta)$-metrics of Douglas type,}
\newblock J. Korean Math Soc. \textbf{38}(3) (2001), 503-521.
\bibitem{1,}
H. S. Park and IL- Yong Lee,\label{PC-8}
\newblock{\em Projective changes between a Finsler space with ($\alpha, \beta $)-metric and the associated Riemannian metric,}
\newblock Tensor \textbf{24} (1984), 163–188.
\bibitem{1,}
A. Rapcsak,\label{PC-11}
\newblock{\em Uber die bahntreuen Abbildungen metrisher Raume,}
\newblock Publ. Math. Debrecen., \textbf{8} (1961), 285-290.
\bibitem{1,}
H. Rund,\label{PC-12}
\newblock{\em The differential geometry of Finsler spaces,}
\newblock Springer- Verlag, Berlin, (1959).
\bibitem{1,}
Z. Shen,\label{PC-12.12}
\newblock{\em Differential Geometry of spray and Finsler spaces,}
\newblock Kluwer Academic Publishers, (2001).
\bibitem{1,}
Z. Shen,\label{PC-13}
\newblock{\em On Landsberg ($\alpha, \beta$)- metrics,}
\newblock (2006), preprint.
\bibitem{1,}
Z. Shen and G. Civi Yildirim,\label{PC-14}
\newblock{\em On a class of projectively flat metrics with constant flag curvature,}
\newblock Canad. J. Math., \textbf{60} (2) (2008), 443-456.
\bibitem{1,}
C. Shibata,\label{PC-15}
\newblock{\em On Finsler spaces with an $(\alpha, \beta)$-metric,}
\newblock J. Hokkaido Univ. of Education, II A, \textbf{35} (1984), 1-6.
%\bibitem{1,}
%H. Shimada and S. V. Sabau,\label{PC-16}
%\newblock{\em Introduction to Matsumoto metric,}
%\newblock Nonlinear Analysis, \textbf{63} (2005), 165- 168.
\end{thebibliography}
\end{document}